\newtheorem{theorem}{Theorem}[section]
\newtheorem{lemma}{Lemma}[section]
\newtheorem{remark}{Remark}[section]
\begin{document}
\setcounter{page}{1} 
\vspace{10mm}

\begin{center}
{\LARGE \bf  A Criterion for Deficient Numbers \\
Using the Abundancy Index \\
and Deficiency Functions}
\vspace{8mm}

{\large \bf Jose Arnaldo B. Dris}
\vspace{3mm}

Department of Mathematics and Physics, Far Eastern University \\ 
Nicanor Reyes Street, Sampaloc, Manila, Philippines \\
e-mail: \url{jadris@feu.edu.ph}, \url{josearnaldobdris@gmail.com}
\vspace{2mm}

\end{center}
\vspace{10mm}

\noindent
{\bf Abstract:} We show that $n$ is almost perfect if and only if $I(n) - 1 < D(n) \leq I(n)$, where $I(n)$ is the abundancy index of $n$ and $D(n)$ is the deficiency of $n$. This criterion is then extended to the case of integers $m$ satisfying $D(m)>1$. \\
{\bf Keywords:} Almost perfect number, abundancy index, deficiency. \\
{\bf AMS Classification:} 11A25.
\vspace{10mm}

\section{Introduction}
If $n$ is a positive integer, then we write $\sigma(n)$ for the sum of the divisors of $n$.  A number $n$ is \emph{almost perfect} if $\sigma(n)=2n - 1$.  It is currently unknown whether there are any other almost perfect numbers apart from those of the form $2^k$, where $k \geq 0$.

We denote the abundancy index $I$ of the positive integer $w$ as $I(w) = \displaystyle\frac{\sigma(w)}{w}$.  We also denote the deficiency $D$ of the positive integer $x$ as $D(x) = 2x - \sigma(x)$ \cite{OEIS-A033879}.

\section{Preliminary Lemmata}
We begin with some preliminary results.

Note that $I(y)=D(y)$ if and only if
$$\sigma(y) = 2y^2 - y\sigma(y)$$
which corresponds to
$$(y+1)\sigma(y) = 2y^2 \Longleftrightarrow \sigma(y) = \frac{2y^2}{y+1} = \frac{2y^2 - 2}{y + 1} + \frac{2}{y + 1} = 2(y - 1) + \frac{2}{y + 1}.$$
Since $\sigma(y)$ and $2(y-1)$ are both integers, this implies that $(y+1) \mid 2$, from which it follows that $y+1 \leq 2$. Hence, $y \leq 1$. Together with $1 \leq y$, this means that $y = 1$.

We state this result as our initial lemma.

\begin{lemma}\label{Lemma0}
$I(n)=D(n)$ if and only if $n=1$.
\end{lemma}

Next, we show conditions that are sufficient and necessary for $n$ to be almost perfect.

\begin{lemma}\label{Lemma1}
If $n$ is a positive integer which satisfies the inequality $$\frac{2n}{n + 1} \leq I(n) < 2,$$ then $n$ is almost perfect.
\end{lemma}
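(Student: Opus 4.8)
The plan is to reduce the stated hypothesis to the exact equality $\sigma(n) = 2n - 1$ by exploiting the fact that $\sigma(n)$ is a positive integer: the two-sided bound on $I(n)$ traps $\sigma(n)$ in an interval so short that it contains only a single integer, namely $2n - 1$.

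First I would dispose of the upper bound. From $I(n) < 2$ we have $\sigma(n)/n < 2$, hence $\sigma(n) < 2n$; since $\sigma(n)$ is a positive integer, this forces $\sigma(n) \leq 2n - 1$.

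Next I would treat the lower bound. Multiplying $\frac{2n}{n+1} \leq I(n) = \sigma(n)/n$ through by $n$ yields $\frac{2n^2}{n+1} \leq \sigma(n)$. The one genuinely algebraic step is to rewrite the left-hand side as
$$\frac{2n^2}{n+1} = 2n - 2 + \frac{2}{n+1},$$
so that $\sigma(n) \geq 2n - 2 + \frac{2}{n+1} > 2n - 2$, the strict inequality coming from $\frac{2}{n+1} > 0$. Appealing once more to the integrality of $\sigma(n)$, this gives $\sigma(n) \geq 2n - 1$.

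Combining the two bounds yields $\sigma(n) = 2n - 1$, which is precisely the statement that $n$ is almost perfect. I do not expect a genuine obstacle; the only point that requires care is that the collapse of the strict and non-strict inequalities into a single admissible value rests entirely on invoking integrality of $\sigma(n)$ at both ends. It is worth noting in passing that the boundary case $n = 1$ is consistent, since there $\frac{2}{n+1} = 1$ and $\frac{2n^2}{n+1} = 1 = 2n - 1$, and the argument $\sigma(n) > 2n - 2$ still applies.
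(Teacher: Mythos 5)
Your proof is correct and follows essentially the same route as the paper: the same key decomposition $\frac{2n^2}{n+1} = 2n - 2 + \frac{2}{n+1}$, followed by squeezing the integer $\sigma(n)$ strictly between $2n-2$ and $2n$. The only cosmetic difference is that you invoke integrality separately at each end of the interval, whereas the paper applies it once to the combined chain of inequalities.
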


\begin{proof}
Let $n$ be a positive integer, and suppose that
$$\frac{2n}{n + 1} \leq I(n) < 2.$$
Then we have
$$\frac{2n^2}{n + 1} \leq \sigma(n) < 2n.$$
But
$$\frac{2n^2}{n + 1} = 2n - 2 + \frac{2}{n + 1} \leq \sigma(n) < 2n.$$
Since $\sigma(n)$ is an integer and $n \geq 1$, this last chain of inequalities forces
$$\sigma(n) = 2n - 1,$$
and we are done.
\end{proof}

\begin{lemma}\label{Lemma2}
If $n$ is almost perfect, then $n$ satisfies the inequality $$\frac{2n}{n + 1} \leq I(n) < 2.$$
\end{lemma}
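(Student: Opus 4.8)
The plan is to compute $I(n)$ directly from the defining relation and then verify the two inequalities separately. Since $n$ is almost perfect, $\sigma(n) = 2n - 1$, so dividing through by $n$ gives
$$I(n) = \frac{\sigma(n)}{n} = \frac{2n - 1}{n} = 2 - \frac{1}{n}.$$
The upper bound is then immediate: because $n \geq 1$ we have $1/n > 0$, and hence $I(n) = 2 - 1/n < 2$.

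For the lower bound I would put the left-hand expression into the same shape. Observe that
$$\frac{2n}{n + 1} = 2 - \frac{2}{n + 1}.$$
Thus the desired inequality $\frac{2n}{n + 1} \leq I(n)$ is equivalent to $2 - \frac{2}{n + 1} \leq 2 - \frac{1}{n}$, that is, to $\frac{1}{n} \leq \frac{2}{n + 1}$. Clearing the (positive) denominators reduces this to $n + 1 \leq 2n$, i.e. to $1 \leq n$, which holds for every positive integer. This establishes the lower bound and completes the argument.

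There is no genuine obstacle here: the lemma is essentially a direct consequence of rewriting $I(n)$ as $2 - 1/n$ under the almost perfect hypothesis. The only step requiring any care is the comparison of the two fractions in the lower bound, and even that collapses to the trivial inequality $n \geq 1$. If it reads more cleanly, I would present both bounds as a single chain, writing $\frac{2n}{n + 1} = 2 - \frac{2}{n + 1} \leq 2 - \frac{1}{n} = I(n) < 2$, with the central inequality justified by $n \geq 1$ and the final strict inequality by $1/n > 0$.
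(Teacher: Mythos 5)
Your proof is correct, but your handling of the lower bound takes a different route from the paper's. The paper proves $\frac{2n}{n+1} \leq I(n)$ by contradiction: it assumes $I(n) < \frac{2n}{n+1}$, multiplies through by $n$ to get $\sigma(n) < \frac{2n^2}{n+1} = 2n - 2 + \frac{2}{n+1} < 2n - 1$, and this contradicts $\sigma(n) = 2n - 1$. You instead argue directly: since the hypothesis pins down the exact value $I(n) = 2 - \frac{1}{n}$, you rewrite $\frac{2n}{n+1} = 2 - \frac{2}{n+1}$ and reduce the desired inequality to $\frac{1}{n} \leq \frac{2}{n+1}$, i.e. to $n \geq 1$, which is trivially true. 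Your route is shorter and arguably cleaner, since it exploits the fact that almost perfection determines $I(n)$ exactly, leaving nothing to do but compare two explicit rational functions of $n$; no contradiction scaffolding is needed. What the paper's route buys is uniformity: its contradiction argument deliberately reuses (the paper says ``mimics'') the computation from Lemma \ref{Lemma1}, and the same template is recycled in Lemmas \ref{Lemma3} and \ref{Lemma4}, so all four proofs read as instances of a single pattern. Both arguments are valid, and yours could serve as a drop-in replacement.
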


\begin{proof}
Let $n$ be a positive integer, and suppose that $\sigma(n) = 2n - 1.$

It follows that
$$I(n) = \frac{\sigma(n)}{n} = 2 - \frac{1}{n} < 2.$$

Now we want to show that
$$\frac{2n}{n + 1} \leq I(n).$$

Assume to the contrary that $I(n) < \frac{2n}{n + 1}$.  (Note that this forces $n > 1.$)  Mimicking the proof in Lemma \ref{Lemma1}, we have
$$\sigma(n) < \frac{2n^2}{n + 1} ,$$
from which it follows that
$$\sigma(n) < 2n - 2 + \frac{2}{n + 1} < (2n - 2) + 1 = 2n - 1.$$

This contradicts our assumption that $n$ is almost perfect.  Hence the reverse inequality
$$\frac{2n}{n + 1} \leq I(n)$$
holds.
\end{proof}

\begin{remark}\label{Remark1}
By their definition, all almost perfect numbers are automatically deficient.  But of course, not all deficient numbers are almost perfect.
\end{remark}

By Remark \ref{Remark1}, it seems natural to try to establish an upper bound for the abundancy index of an almost perfect number $n$, that is ${\it strictly}$ less than 2, and which (perhaps) can be expressed as a rational function of $n$ (similar to the form of the lower bound given in Lemma \ref{Lemma1} and Lemma \ref{Lemma2}).

\begin{lemma}\label{Lemma3}
If $n$ is a positive integer which satisfies the inequality $$\frac{2n}{n + 1} \leq I(n) < \frac{2n + 1}{n + 1},$$
then $n$ is almost perfect.
\end{lemma}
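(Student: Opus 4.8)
The plan is to recognize that the hypothesis of this lemma is nothing more than a tightening of the hypothesis of Lemma \ref{Lemma1}, so that the conclusion should follow with almost no additional work. The key preliminary computation is to rewrite the proposed upper bound in a transparent form: since
$$\frac{2n + 1}{n + 1} = \frac{2(n + 1) - 1}{n + 1} = 2 - \frac{1}{n + 1},$$
and $n \geq 1$ forces $0 < \frac{1}{n + 1} \leq \frac{1}{2}$, I get immediately that $\frac{2n + 1}{n + 1} < 2$ for every positive integer $n$.

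With that in hand, the first route I would take is the short one. Any $n$ satisfying
$$\frac{2n}{n + 1} \leq I(n) < \frac{2n + 1}{n + 1}$$
then automatically satisfies $\frac{2n}{n + 1} \leq I(n) < 2$, which is exactly the hypothesis of Lemma \ref{Lemma1}; invoking that lemma yields $\sigma(n) = 2n - 1$ at once. Alternatively, to keep the argument self-contained I would mimic the proof of Lemma \ref{Lemma1} directly: multiplying through by $n$ gives $\frac{2n^2}{n + 1} \leq \sigma(n) < \frac{2n^2 + n}{n + 1}$, and performing the divisions rewrites the two sides as $2n - 2 + \frac{2}{n + 1}$ and $2n - 1 + \frac{1}{n + 1}$. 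Because $\frac{1}{n + 1} < 1$ and $\sigma(n)$ is an integer, these estimates trap $\sigma(n)$ strictly between $2n - 2$ and $2n$, forcing $\sigma(n) = 2n - 1$.

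I do not anticipate any genuine obstacle. The only step requiring even momentary care is verifying $\frac{2n + 1}{n + 1} < 2$, and that becomes self-evident the moment the bound is written as $2 - \frac{1}{n + 1}$. The substantive point is conceptual rather than computational: the sharper upper bound $\frac{2n + 1}{n + 1}$ still lies above the value $I(n) = 2 - \frac{1}{n}$ attained by an almost perfect number, so the tightening costs nothing, and Lemma \ref{Lemma3} stands as exactly the strengthening of Lemma \ref{Lemma1} that the preceding remark motivates.
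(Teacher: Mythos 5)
Your proposal is correct, and your primary route is genuinely different from the paper's. The paper proves Lemma \ref{Lemma3} by redoing the computation from scratch: it multiplies the full hypothesis through by $n$ to get $\frac{2n^2}{n+1} \leq \sigma(n) < \frac{2n^2+n}{n+1}$, rewrites both sides as $2n - 2 + \frac{2}{n+1}$ and $2n - 1 + \frac{1}{n+1}$, and concludes $\sigma(n) = 2n-1$ — exactly your second, self-contained route. Your first route instead observes that $\frac{2n+1}{n+1} = 2 - \frac{1}{n+1} < 2$, so the hypothesis of Lemma \ref{Lemma3} simply implies that of Lemma \ref{Lemma1}, and the conclusion follows by citation. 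This reduction is shorter and makes a structural point the paper leaves implicit: as a sufficiency statement, Lemma \ref{Lemma3} is formally weaker than Lemma \ref{Lemma1}, since any upper bound below $2$ would do; the tighter bound $\frac{2n+1}{n+1}$ only earns its keep in the necessity direction (Lemma \ref{Lemma4}). What the paper's explicit computation buys in exchange is the displayed inequality $\sigma(n) < 2n - 1 + \frac{1}{n+1}$, which Lemma \ref{Lemma4}'s proof then reuses verbatim (``mimicking the proof in Lemma \ref{Lemma3}'') to derive its contradiction; your reduction would require Lemma \ref{Lemma4} to carry out that algebra itself. Either way, both of your arguments are sound, and your trapping step ($2n - 2 < \sigma(n) < 2n$ with $\sigma(n)$ an integer) is exactly right.
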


\begin{proof}
Let $n$ be a positive integer, and suppose that
$$\frac{2n}{n + 1} \leq I(n) < \frac{2n + 1}{n + 1}.$$

Again, mimicking the proof in Lemma \ref{Lemma1}, we have
$$\frac{2n^2}{n + 1} \leq \sigma(n) < \frac{2n^2 + n}{n + 1},$$
from which it follows that
$$2n - 2 + \frac{2}{n + 1} \leq \sigma(n) < 2n - 1 + \frac{1}{n + 1}.$$

Since $n \geq 1$, this last chain of inequalities forces the equation
$$\sigma(n) = 2n - 1$$
to be true.  Consequently, $n$ is almost perfect, and we are done.
\end{proof}

We now show that the (nontrivial) upper bound obtained for the abundancy index of $n$ in Lemma \ref{Lemma3} is also necessary for $n$ to be almost perfect.

\begin{lemma}\label{Lemma4}
If $n$ is almost perfect, then $n$ satisfies the inequality $$\frac{2n}{n + 1} \leq I(n) < \frac{2n + 1}{n + 1}.$$
\end{lemma}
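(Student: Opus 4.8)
The plan is to split the statement into its two inequalities and handle them separately, noting at the outset that the lower bound $\frac{2n}{n+1} \leq I(n)$ is \emph{identical} to the lower bound already established in Lemma \ref{Lemma2}. Since the hypothesis here ($n$ almost perfect) is the same as there, I would simply invoke Lemma \ref{Lemma2} to dispose of the left-hand inequality with no further argument. All the genuine content therefore lies in proving the strict upper bound $I(n) < \frac{2n+1}{n+1}$.

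For the upper bound, the key move is to express both sides in the common form $2 - \frac{1}{\,\cdot\,}$. First I would compute the abundancy index directly from the defining relation: since $\sigma(n) = 2n - 1$, we have
$$I(n) = \frac{\sigma(n)}{n} = \frac{2n - 1}{n} = 2 - \frac{1}{n}.$$
Next I would rewrite the target bound in the same shape, using $2n + 1 = 2(n+1) - 1$, so that
$$\frac{2n + 1}{n + 1} = 2 - \frac{1}{n + 1}.$$

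With both quantities in this form, the desired inequality $I(n) < \frac{2n+1}{n+1}$ reduces to
$$2 - \frac{1}{n} < 2 - \frac{1}{n + 1},$$
which is equivalent to $\frac{1}{n+1} < \frac{1}{n}$. This last inequality holds for every positive integer $n$ because $n < n + 1$, so I would conclude the strict upper bound immediately. Combining this with the lower bound inherited from Lemma \ref{Lemma2} completes the proof.

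I do not anticipate a genuine obstacle here: unlike Lemma \ref{Lemma2}, whose upper bound required a contradiction argument, the present upper bound follows from the exact evaluation $I(n) = 2 - \frac{1}{n}$ available once $\sigma(n) = 2n - 1$ is known. The only point worth care is the algebraic rewriting $\frac{2n+1}{n+1} = 2 - \frac{1}{n+1}$, which makes the comparison transparent; everything else is a direct, one-line estimate.
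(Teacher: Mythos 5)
Your proof is correct, but it takes a genuinely different route for the upper bound than the paper does. The paper handles the lower bound the same way you do (by reducing to Lemma \ref{Lemma2}), but for the upper bound it argues by contradiction: assuming $\frac{2n+1}{n+1} \leq I(n)$, it mimics the algebraic manipulation of Lemma \ref{Lemma3} to get $\frac{2n^2+n}{n+1} \leq \sigma(n)$, hence $2n-1 < \sigma(n)$, contradicting $\sigma(n) = 2n-1$. You instead compute $I(n)$ exactly from the hypothesis, $I(n) = 2 - \frac{1}{n}$, rewrite the target as $\frac{2n+1}{n+1} = 2 - \frac{1}{n+1}$, and observe that the inequality reduces to $\frac{1}{n+1} < \frac{1}{n}$. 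Your argument is more elementary and more transparent: it exhibits exactly where $I(n)$ sits relative to the bound rather than merely refuting the opposite inequality, and the same trick (writing $\frac{2n}{n+1} = 2 - \frac{2}{n+1}$) would let you reprove the lower bound in one line as well, making the appeal to Lemma \ref{Lemma2} optional. What the paper's version buys is uniformity of exposition --- all four lemmas are proved by the same clearing-of-denominators template --- but nothing in the logic requires that; your direct evaluation is a clean simplification.
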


\begin{proof}
It suffices to prove that if $n$ is almost perfect, then the inequality
$$I(n) < \frac{2n + 1}{n + 1}$$
holds.  To this end, assume to the contrary that
$$\frac{2n + 1}{n + 1} \leq I(n).$$
Mimicking the proof in Lemma \ref{Lemma3}, we obtain
$$2n - 1 + \frac{1}{n + 1} = \frac{2n^2 + n}{n + 1} \leq \sigma(n),$$
from which it follows that
$$2n - 1 < \sigma(n).$$
This contradicts our assumption that $n$ is almost perfect, and we are done.
\end{proof}

\section{Main Results}
Collecting all the results from Lemma \ref{Lemma1}, Lemma \ref{Lemma2}, Lemma \ref{Lemma3} and Lemma \ref{Lemma4}, we now have the following theorem.

\begin{theorem}\label{Theorem1}
Let $n$ be a positive integer.  Then $n$ is almost perfect if and only if the following chain of inequalities hold:
$$\frac{2n}{n + 1} \leq I(n) < \frac{2n + 1}{n + 1}.$$
\end{theorem}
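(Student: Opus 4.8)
The plan is to prove the biconditional by assembling the four preceding lemmas, so that essentially no new computation is required beyond invoking them in the correct order. I would first dispose of the forward implication: assuming $n$ is almost perfect, Lemma \ref{Lemma4} already delivers the full chain
$$\frac{2n}{n + 1} \leq I(n) < \frac{2n + 1}{n + 1},$$
since its statement packages both the lower bound (which is the content of Lemma \ref{Lemma2}) and the sharper upper bound. Thus the ``only if'' direction needs nothing more than an appeal to Lemma \ref{Lemma4}.

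For the converse, suppose $n$ satisfies $\frac{2n}{n + 1} \leq I(n) < \frac{2n + 1}{n + 1}$. This is precisely the hypothesis of Lemma \ref{Lemma3}, whose conclusion is that $\sigma(n) = 2n - 1$, that is, $n$ is almost perfect. Hence the ``if'' direction is immediate from Lemma \ref{Lemma3}, and the two implications together establish the theorem.

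Since both directions reduce to direct citations, there is no genuine obstacle at the level of the theorem itself; the real work was carried out in the lemmas. If I wanted to make the argument self-contained, I would reproduce the key integrality step, namely that multiplying the displayed inequality through by $n$ confines $\sigma(n)$ to the half-open interval $\left[2n - 2 + \frac{2}{n + 1}, \ 2n - 1 + \frac{1}{n + 1}\right)$, which for every $n \geq 1$ contains exactly one integer, namely $2n - 1$. This pinning of $\sigma(n)$ to a unique integer is the one place where the specific numerators $2n$ and $2n + 1$ matter, and it is what makes the chosen bounds sharp rather than merely sufficient.
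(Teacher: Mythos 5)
Your proposal is correct and matches the paper's approach exactly: the paper proves Theorem \ref{Theorem1} simply by collecting the preceding lemmas, just as you do by citing Lemma \ref{Lemma4} for the ``only if'' direction and Lemma \ref{Lemma3} for the ``if'' direction. Your added remark on the integrality step (that the interval $\left[2n - 2 + \frac{2}{n+1},\ 2n - 1 + \frac{1}{n+1}\right)$ traps exactly the integer $2n-1$) is a faithful summary of the computations inside Lemmas \ref{Lemma3} and \ref{Lemma4}, so nothing is missing.
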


\begin{remark}\label{Remark2}
Note that equality holds in
$$\frac{2n}{n + 1} \leq I(n)$$
if and only if $n=1$.

Also, it is trivial to verify that the known almost perfect numbers $n = 2^k$ (for integers $k \geq 0$) satisfy the inequalities in Theorem \ref{Theorem1}.  In fact, Theorem \ref{Theorem1} can be used to rule out particular families of integers from being almost perfect numbers. (For example, $n_1 = p^r$ and $n_2 = p^r q^s$, where $p, q$ are odd primes and $r, s$ are even, are easily shown not to satisfy $\sigma(n_i) = 2{n_i} - 1$ ($i=1,2$) using the criterion in Theorem \ref{Theorem1}.)

This can, of course, also be attempted for the case $M = {2^r}{b^2}$ with $\sigma(M)=2M-1$, where $r \geq 1$ and $b$ is an odd composite indivisible by $3$,  although a complete proof appears to be difficult \cite{AntalanDris}.
\end{remark}

We now give a proof for the following result (which was originally conjectured in \\
\url{http://arxiv.org/pdf/1308.6767v4.pdf}).

\begin{theorem}\label{Theorem2}
The bounds in Theorem \ref{Theorem1} are best-possible.
\end{theorem}

\begin{proof}
The bounds
$$\frac{2n}{n + 1} \leq I(n) = \frac{\sigma(n)}{n} < \frac{2n + 1}{n + 1}$$
are easily seen to be equivalent to
$${2n}\cdot{n} \leq (n+1)\cdot\sigma(n) < {2n}\cdot{n} + n$$
which further implies that
$$n\left(2n - \sigma(n)\right) \leq \sigma(n)$$
and
$$\sigma(n) - n < \left(2n - \sigma(n)\right)n$$
so that we obtain
$$I(n) - 1 < 2n - \sigma(n) = D(n) \leq I(n).$$
Since
$$D(n) = 1 \leq I(n) = \frac{\sigma(n)}{n} < 2 = D(n) + 1$$
when $n$ is almost perfect, we obtain the claimed result.
\end{proof}

\begin{remark}\label{Remark3}
Following the proof of Theorem \ref{Theorem2}, and by using Lemma \ref{Lemma0}, we have the following additional observations when 
$m > 1$:
{
\begin{enumerate}
	\item{If $D(m) < I(m) < 2$, then $m$ is almost perfect, by Lemma \ref{Lemma1}.}
	\item{The case $I(m) < D(m) < 2$ cannot occur, as it implies that $2m - 2 < \sigma(m) < 2m$, so that $D(m)=2m-\sigma(m)=1$, contradicting $1 \leq I(m) < D(m) = 1$.}
	\item{If $I(m) < 2 \leq D(m)$, then $m$ is not almost perfect, by Lemma \ref{Lemma1}.}
\end{enumerate}
}
\end{remark}

Next, we attempt to extend the results in Theorem \ref{Theorem1} to the case of positive integers $m$ satisfying $D(m)>1$. To begin with, notice that, if we write $1=D(n)$, then the bounds in Theorem \ref{Theorem1} take the following form:
$$\frac{2n}{n + D(n)} \leq I(n) < \frac{2n + D(n)}{n + D(n)}.$$
By Remark \ref{Remark2}, equality holds in
$$\frac{2n}{n + D(n)} \leq I(n)$$
if and only if $n=1$, which is true if and only if $I(n)=D(n)$ by Lemma \ref{Lemma0}.

Now, assume that $m$ is a positive integer with $D(m)>1$.  We want to show that the following theorem holds.

\begin{theorem}\label{Theorem3}
Let $m$ be a positive integer, and suppose that $D(m)>1$. Then we have the following bounds for the abundancy index of $m$, in terms of the deficiency of $m$:
$$\frac{2m}{m + D(m)} < I(m) < \frac{2m + D(m)}{m + D(m)}.$$
\end{theorem}

\begin{proof}
Assume that $m$ is a positive integer satisfying $D(m)>1$.  (In particular, note that $m>1$ (by Lemma \ref{Lemma0}), and therefore that $I(m)>1$ (since $I(m)=1$ if and only if $m=1$).)

Suppose to the contrary that
$$I(m) \leq \frac{2m}{m + D(m)} = \frac{2m}{3m - \sigma(m)}.$$
Then we have (noting that $3m - \sigma(m) > 2m - \sigma(m) > 1$)
$$3m\sigma(m) - \left(\sigma(m)\right)^2 \leq 2m^2.$$
Dividing through by $m^2$, we get
$$\left(I(m)\right)^2 - 3I(m) + 2 \geq 0$$
which implies that
$$\left(I(m) - 2\right)\left(I(m) - 1\right) \geq 0.$$
This is a contradiction, as we know that $1 < I(m) < 2$.

Now, assume that
$$I(m) \geq \frac{2m + D(m)}{m + D(m)} = \frac{4m - \sigma(m)}{3m - \sigma(m)}.$$
Then we obtain (noting that $4m - \sigma(m) > 3m - \sigma(m) > 2m - \sigma(m) > 1$)
$$3m\sigma(m) - \left(\sigma(m)\right)^2 \geq 4m^2 - m\sigma(m).$$ 
Again, dividing through by $m^2$, we get
$$\left(I(m)\right)^2 - 4I(m) + 4 \leq 0$$
which implies that
$$\left(I(m) - 2\right)^2 \leq 0.$$
This contradicts $1 < I(m) < 2$.

Consequently, we have the bounds
$$\frac{2m}{m + D(m)} < I(m) < \frac{2m + D(m)}{m + D(m)}$$
if $D(m)>1$, and we are done.
\end{proof}

We end this section with the following result, which closely parallels that of Theorem \ref{Theorem2}.

\begin{theorem}\label{Theorem4}
The bounds in Theorem \ref{Theorem3} are best-possible.
\end{theorem}

\begin{proof}
The bounds
$$\frac{2m}{m + D(m)} < I(m) < \frac{2m + D(m)}{m + D(m)}$$
are easily seen to be equivalent to
$$2m < \sigma(m) + D(m)I(m) < 2m + D(m)$$
which further implies that
$$D(m) =  2m - \sigma(m) < D(m)I(m) < \left(2m - \sigma(m)\right) + D(m) = 2D(m)$$
so that we obtain
$$1 < I(m) < 2$$
since $D(m)>1$.
Since $1 < I(m) < 2$ also holds for deficient integers $m$ satisfying $D(m)>1$, the claimed result follows.
\end{proof}

\section{Conclusion}
The results in this article originated from the author's attempts to show that $d$ is almost perfect, if $e^f d^2$ is an odd perfect number with Euler prime $e$ satisfying $e \equiv f \equiv 1 \pmod 4$.  (That is, since 
$$I(d) < I(d^2) = \frac{2}{I(e^f)} \leq \frac{2e}{e+1},$$
the author was hoping to show $d<e$, and thereby prove the Descartes-Frenicle-Sorli conjecture (i.e., $f=1$) as an immediate consequence.)

It is now known that, in fact, one has
$$I(d^2) < \frac{2d^2}{d^2 + 1}$$
and
$$I(d) < \frac{2d}{d+1},$$
so that $d^2$ (and therefore, $d$) are not almost perfect.

Additionally, Brown \cite{Brown} has recently announced a proof for $e < d$, and a partial proof that $e^f < d$ holds ``in many cases".

Nonetheless, work is in progress in \cite{AntalanDris} to try to rule out even almost perfect numbers other than the powers of two.

\section{Acknowledgments}
The author expresses his gratitude to Will Jagy for having answered his question at \\
\url{http://math.stackexchange.com/q/462307} from which the proofs of Lemmas \ref{Lemma1}, \ref{Lemma2}, \ref{Lemma3} and \ref{Lemma4} were patterned after. The author would also like to thank John Rafael M. Antalan from the Central Luzon State University, for helpful chat exchanges on this topic.

Lastly, the author thanks Keneth Adrian P. Dagal for motivating him to pursue the completion of this paper.

\bibliographystyle{amsplain}

\end{document}